\newtheorem {theorem}{Theorem}
 \newtheorem {lemma}[theorem]{Lemma}
\newtheorem {proposition}[theorem]{Proposition} 
\newenvironment {proof}[1][Proof]{\noindent \textbf {#1.} }{\ \rule {0.5em}{0.5em}}
\begin{document}
\title{Nilpotent Orbits for Borel Subgroups of
$SO_{5}(k)$}

\author{ Madeleine Burkhart and David Vella
}
\date{August 16, 2017
}
\maketitle

\begin{abstract}Let
$G$
be a quasi-simple algebraic group defined over an algebraically closed field
$k$
and
$B$
a Borel subgroup of
$G$
acting on the nilradical
$\mathfrak{n}$
of its Lie algebra
$\mathfrak{b}$
via the Adjoint representation.  It is known that
$B$
has only finitely many orbits in only five cases: when
$G$
is of type
$A_{n}$
for
$n \leq 4 ,$
and
$G$
is type
$B_{2} .$
In this paper, we elaborate on this work in the case when
$G =SO_{5}(k)$
(type
$B_{2})$
by finding the defining equations of each orbit.  We use these equations to determine the
dimension of the orbits and the closure ordering on the set of orbits. The other four cases, when
$G$
is type
$A_{n} ,$
can be approached the same way and are treated in a separate paper.

\end{abstract}

\section{ Introduction.
}
Before specializing to
$G =SO_{5}(k) ,$
we make some general remarks in order to provide context and some motivation for our work.
Let
$k$
be an algebraically closed field and
$G$
a quasi-simple algebraic group over
$k .$
Fix a maximal torus
$T$
of
$G$, and let
$\Phi $
denote the root system of
$G$
relative to
$T$
($\Phi $
is irreducible since
$G$
is quasi-simple.)
Fix a set \/$\Delta $
of simple roots in
$\Phi  ,$
with corresponding set of positive roots
$\Phi ^{ +} ,$
and let
$B =TU$
($U$
is the unipotent radical of
$B)$
be the Borel subgroup of
$G$
determined by
$\Phi ^{ +} .$
Write the one-dimensional unipotent root group corresponding to a root
$\alpha $
as
$U_{\alpha } .$
Denote the Lie algebra of
$G$
by
$\mathfrak{g}$
, that of
$T$
by
$\mathfrak{h}$
, and that of
$B$
by
$\mathfrak{b} .$
Then the nilradical
$\mathfrak{n} =\mathfrak{n}(\mathfrak{b})$
of
$\mathfrak{b}$
is in fact the Lie algebra of
$U$, and we have decompositions
$\mathfrak{b} =\mathfrak{h} \oplus \mathfrak{n} ,$
and
$\mathfrak{n} = \oplus _{\alpha  \in \Phi ^{ +}}\mathfrak{g}_{\alpha }$
as vector spaces, where
$\mathfrak{g}_{\alpha }$
is the root space of
$\mathfrak{g}$
corresponding to
$\alpha  ,$
and is also the Lie algebra of
$U_{\alpha } .$

$G$
acts on
$\mathfrak{g}$
via the Adjoint representation, and the orbits of this action have been intensely studied,
partly because there are connections between the nilpotent orbit theory and the
representation theory of
$G .$
It is known that there are only finitely many nilpotent
$G$-orbits (a \emph{nilpotent orbit} means an orbit of a nilpotent element of
$\mathfrak{g} .)$
There are combinatorial indexing sets for these nilpotent orbits, and there are formulas to
compute the dimension of each orbit.  Also, it is known which orbits are in the Zariski closures of
any given orbit (the \emph{closure ordering}.) Therefore, it is well understood
how all the nilpotent orbits fit together to form a larger object, called the
\emph{nullcone}
$\mathcal{N}$
of
$\mathfrak{g} ,$
which is the union of the nilpotent orbits.  For details of this classical theory, see [CM]
for the characteristic zero case and [C] or [J] more generally.

The notion of a support variety of a module is one example of a connection between
nilpotent
$G$-orbits and representation theory, when the characteristic of
$k$
is
$p >0.$
In this case recall that there is a
$p$\emph{-th power map}
$x \mapsto x^{[p]}$
on
$\mathfrak{g}$
that makes
$\mathfrak{g}$
into a restricted Lie algebra, and there is a \emph{Frobenius map}
$F :G \rightarrow G$
whose kernel is denoted
$G_{1}$
which is an infinitesimal group scheme, whose rational representation theory coincides with
the representation theory of
$\mathfrak{g} .$
Similarly, denote the kernel of
$F :B \rightarrow B$
by
$B_{1} .$
By results of Friedlander and Parshall [FP], the \emph{cohomology variety
}of
$G_{1}$
(the maximal ideal spectrum of the cohomology ring
$H^{ \ast }(G_{1} ,k)$
identifies naturally with a subvariety
$\mathcal{N}_{1} =\{x \in \mathcal{N}$
\textbar{}
$x^{[p]} =0\}$
of the nullcone
$\mathcal{N}$
of
$\mathfrak{g} ,$
and furthermore, for any finite-dimensional
$\mathfrak{g}$-module
$M$
, there is an important subvariety of
$\mathcal{N}_{1}$
called the \emph{support variety} of
$M$
, denoted
$V_{\mathfrak{g}}(M)$
or
$V_{G_{1}}(M)$
. If
$M$
is a rational
$G$-module, then
$V_{G_{1}}(M)$
is
$G$-stable in
$\mathcal{N}_{1}$
and is therefore a union of nilpotent
$G$-orbits.

 Aspects of the representation theory of
$G_{1}$
\thinspace are determined by this support variety (for example,
$M$
is a projective module if and only if
$V_{G_{1}}(M) =\{0\}$), so one would like to be able to compute these support varieties, and knowing they are
unions of nilpotent
$G$-orbits may be useful.

 If
$H$
is a closed subgroup of
$G$
and
$M$
is a rational
$H$-module, denote the rational
$G$-module induced from
$M$
by
$M\vert _{H}^{G} .$
Now let
$X(T)$
be the character group of
$T ,$
and for
$\lambda  \in X(T) ,$
we also use the symbol
$\lambda $
to denote the one-dimensional
$T$-module on which
$T$
acts via
$\lambda  :$
$t .v =\lambda (t)v$
for all
$t \in T .$
This rational
$T$-module extends to a rational
$B$-module by trivial
$U$
action, also denoted
$\lambda  .$
The modules
$\lambda \vert _{B}^{G}$
are (the duals of) the well-known \emph{Weyl modules} of
$G .$
Another important class of modules are those of the form
$Z(\lambda ) =\lambda \vert _{B_{1}}^{G_{1}} ,$
sometimes called \emph{baby Verma modules}. (The name comes from the fact
that there is an alternate definition of
$Z(\lambda )$
which is analogous to the definition of a Verma module for
$\mathfrak{g} ,$
while the adjective `baby' can be interpreted a alluding to the infinitesimal subgroups in our definition using
induction, or to the fact that the $Z(\lambda )$ are finite dimensional and the usual Verma modules are not.)

 One of the goals of the paper [NPV] was to calculate the support varieties
$V_{G_{1}}(\lambda \vert _{B}^{G})$
in order to prove the ``Jantzen conjecture'' [NPV, theorem 6.21.]  A central idea of [NPV]
is to compare
$V_{G_{1}}(\lambda \vert _{B}^{G})$
to
$V_{G_{1}}(Z(\lambda )) .$
Of course,
$Z(\lambda )$
is not a
$G$-module (only a
$G_{1}$-module), so one would not expect
$V_{G_{1}}(Z(\lambda ))$
to be a
$G$-stable subset of
$\mathcal{N}_{1} .$
However, it turns out that this variety is stable under the action of
$B$
and therefore
$V_{G_{1}}(Z(\lambda ))$
is a union of nilpotent
$B$-orbits [NPV, Prop.7.1.1].

 That nilpotent
$B$-orbits (as well as nilpotent
$G$-orbits) have a connection to the representation theory of
$G$
provides a motivation for studying nilpotent
$B$-orbits. Even without this explicit motivation, it is interesting to to try to generalize
what is known about nilpotent
$G$-orbits to nilpotent
$B$-orbits. However, nilpotent
$B$-orbits are not nearly as tidy as is the case for nilpotent
$G$-orbits. One important difference is that most of the time there are infinitely many
nilpotent
$B$-orbits. Thus, finding a nice indexing set for the orbits could be difficult.  In general,
we would expect an indexing set to have a continuous piece as well as a discrete piece, as certain
infinite families of orbits might be described by continuous parameters.

 In this paper, our focus is on the case
$G =SO_{5}(k) ,$
which is one of the five cases where there are finitely many nilpotent
$B$-orbits. For each of the orbits, we find the defining polynomial equations of the
orbit.
From these calculations, it is easy to to determine both the dimension of each orbit
as well as the closure ordering for the set of orbits. \bigskip

\section{ Nilpotent
$B$-Orbits in
$SO_{5}(k)$.
}
Throughout this section we assume the characteristic of
$k$
is either
$0$
or a prime
$p \neq 2.$
The following Proposition is a basic fact about algebraic group actions. A proof can be
found in [B] or [H2].

\begin{proposition}
 Let
$G$
be an algebraic Group acting morphically on a non-empty variety
$V .$
Then each orbit is a locally closed, smooth variety, and its boundary is a union of
orbits of strictly lower dimension.
\end{proposition}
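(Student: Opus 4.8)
The plan is to establish the three assertions—local closedness, smoothness, and the stated description of the boundary—in that order, with the first resting on Chevalley's theorem on images of morphisms and the remaining two following formally from transitivity together with the fact that each $g \in G$ acts on $V$ as an automorphism, hence a homeomorphism. Fix $v \in V$ and consider the orbit map $\varphi : G \to V$, $g \mapsto g\cdot v$, which is a morphism because the action is morphic; the orbit $\mathcal{O} = G\cdot v$ is exactly its image. By Chevalley's theorem the image of a morphism is constructible, and a constructible set always contains a dense open subset $W$ of its own closure $\overline{\mathcal{O}}$. The crucial point is then that $\mathcal{O}$ is homogeneous: for every $g \in G$ the translate $g\cdot W$ is again contained in $\mathcal{O}$ and is open in $\overline{\mathcal{O}}$, since $g$ is a homeomorphism of $V$ carrying $\overline{\mathcal{O}}$ onto itself. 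Because each point of $\mathcal{O}$ lies in some $g\cdot W$, the union $\bigcup_{g \in G} g\cdot W$ equals $\mathcal{O}$ and is open in $\overline{\mathcal{O}}$. Thus $\mathcal{O}$ is open in its closure, i.e. locally closed, and in particular it is a variety in its own right, dense in $\overline{\mathcal{O}}$.

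For smoothness I would invoke the standard fact that the smooth locus of any variety is a nonempty, dense open subset. Applied to $\mathcal{O}$, this locus is carried to itself by every automorphism $g\cdot(-)$, so it is a nonempty $G$-stable subset of $\mathcal{O}$; by transitivity of the $G$-action on $\mathcal{O}$ it must be all of $\mathcal{O}$. Hence every point of the orbit is smooth.

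For the boundary $\partial\mathcal{O} = \overline{\mathcal{O}} \setminus \mathcal{O}$, I would first observe that it is $G$-stable: $G$ preserves both $\mathcal{O}$ and $\overline{\mathcal{O}}$, hence their difference, so $\partial\mathcal{O}$ is a union of orbits. For the dimension count, since $\mathcal{O}$ is open and dense in $\overline{\mathcal{O}}$ we have $\dim \overline{\mathcal{O}} = \dim \mathcal{O}$, while $\partial\mathcal{O}$ is a proper closed subset of $\overline{\mathcal{O}}$ and therefore $\dim \partial\mathcal{O} < \dim \overline{\mathcal{O}} = \dim \mathcal{O}$. Consequently every orbit lying in the boundary has dimension strictly less than that of $\mathcal{O}$.

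I expect the only genuinely nontrivial ingredient to be Chevalley's constructibility theorem in the first step; everything afterward is a formal consequence of transitivity and of the group acting by homeomorphisms. The one bookkeeping subtlety is the strict dimension inequality for $\partial\mathcal{O}$, which is immediate when $\overline{\mathcal{O}}$ is irreducible. When $G$ is connected this holds automatically, since $\mathcal{O}$ is then the image of the irreducible variety $G$; for disconnected $G$ one reduces to this case by writing $\mathcal{O}$ as the finite union of the translates of the identity-component orbit and applying the inequality on each irreducible component of $\overline{\mathcal{O}}$ separately. In the application to the Borel subgroup $B$ acting on $\mathfrak{n}$ the group is connected, so this point does not arise.
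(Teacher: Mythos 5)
Your proposal is correct. The paper itself offers no proof of this proposition, deferring to the references [B] and [H2], and your argument—Chevalley's constructibility theorem plus homogeneity for local closedness, transitivity acting on the smooth locus for smoothness, and the proper-closed-subset dimension inequality (with the reduction to $G^{0}$-orbits when $\overline{\mathcal{O}}$ is reducible) for the boundary—is essentially the standard proof found in those cited texts.
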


 Thus, the orbit
$G .x$
is open and dense in its closure
$\overline{G .x}$, and hence has the same dimension as its closure. 

We study the action of a Borel subgroup
$B$
of
$G =SO_{5}(k)$
on the nilradical
$\mathfrak{n}$
of it's Lie algebra
$\mathfrak{b} \subseteq \mathfrak{s}\mathfrak{o}_{5}(k) ,$
via the Adjoint representation. Our main results consist of finding the defining
equations of each nilpotent
$B$-orbit, which will exhibit each orbit explicitly as an intersection of an open set and a
closed set.  From there it will be an easy matter to determine the closure of each orbit, and
thereby find the dimension of each orbit as well as to determine which orbits comprise the boundary
of a given orbit.  That is, we will determine the partial order determined by the orbit closures,
which is defined as
$G .x \preceq G .y$
if and only if
$G .x \subseteq \overline{G .y} .$

Let
$f$
be a polynomial.  We use the standard notation that the zero set of
$f$
is written as
$Z(f)$
and that
$Z(f ,g) =Z(f) \cap Z(g)$
is the set of common zeros of polynomials
$f$
and
$g .$
Then if we have a finite set of polynomials then
$Z(f_{1} ,f_{2} , . . . ,f_{k})$
is a Zariski-closed set, that is, it is an affine variety.  The notation
$V(f)$
denotes the complement of
$Z(f)$
- the set of non-zeros of
$f ,$
and so
$V(f)$
is a Zariski open set.  A locally closed set is an intersection of an open set and a closed
set, and in this section the orbits will turn out to be locally closed sets of the form
$V =Z(f_{1} ,f_{2} , . . . ,f_{k}) \cap V(g_{1}) \cap V(g_{2}) \cap  . . . \cap V(g_{\ell })$
for polynomials
$f_{i}$
and
$g_{j} .$
Observe that the closure of
$V$
is then
$Z(f_{1} ,f_{2} , . . . ,f_{k})$
and
$V$
is open and dense in this closure, whence
$\dim V =\dim Z(f_{1} ,f_{2} , . . . ,f_{k}) .$

If
$U\gamma $
is a root group of
$G ,$
then
$U_{\gamma }(t)$
denotes the image of
$t$
under the standard isomorphism
$k_{add} \approx U_{\gamma }$
. In classical groups, the Adjoint action on the Lie algebra is simply conjugation of
matrices. The matrix
$e_{ij}$
is the matrix with a
$1$
in the
$ij$
position and
$0$
everywhere else.         Now
$\mathfrak{g} =\mathfrak{s}\mathfrak{o}_{5}(k)$
, and we take
$T$
to be the diagonal matrices in
$G .$
More precisely, a typical element of the torus
$T$
has the form:
\label{Toruselement}
\begin{equation*}T(s ,t) =diag(1 ,s ,t ,s^{ -1} ,t^{ -1}) =\left [\begin{array}{ccccc}1 & 0 & 0 & 0 & 0 \\
0 & s & 0 & 0 & 0 \\
0 & 0 & t & 0 & 0 \\
0 & 0 & 0 & s^{ -1} & 0 \\
0 & 0 & 0 & 0 & t^{ -1}\end{array}\right ]
\end{equation*}

for
$s$
and
$t$
nonzero in
$k .$

 For root systems we use the standard notation one finds in [H1] or
[Bou].  In type
$B_{2} ,$
the simple roots are
$\Delta  =\{\alpha _{1} ,\alpha _{2}\}$
and
$\Phi ^{ +} =\{\alpha _{1} ,$
$\alpha _{2} ,$
$\alpha _{1} +\alpha _{2} ,$
$\alpha _{1} +2\alpha _{2}\} ,$
so
$\mathfrak{n}$
$ \approx k^{4}$
as an affine variety and vector space.  The root vectors in $\mathfrak{n}$ are the matrices:

\begin{center}
\begin{tabular}[c]{l}
$x_{\alpha _{1}} =e_{23} -e_{54}$
\\
$x_{\alpha _{2}} =e_{15} -e_{31}$
\\
$x_{\alpha _{1} +\alpha _{2}} =e_{14} -e_{21}$
\\
$x_{\alpha _{1} +2\alpha _{2}} =e_{25} -e_{34}$
\end{tabular}\end{center}\par
Now each root space has a coordinate function, which we denote by a capital
$X$
with the same subscript as the root space.  In other words, a typical element
of
$\mathfrak{n}$
has the form of a linear combination of the four root vectors:
$x =a_{1}x_{\alpha _{1}} +a_{2}x_{\alpha _{2}} +a_{3}x_{\alpha _{1} +\alpha _{2}} +a_{4}x_{\alpha _{1} +2\alpha _{2}} ,$
which we can write in coordinate form as
$(a_{1} ,a_{2} ,a_{3} ,a_{4})$
(showing explicitly
$\mathfrak{n} \approx k^{4})$
, and the coordinate function just selects the appropriate coordinate, so
$X_{\alpha _{1}}(x) =a_{1}$
and
$X_{\alpha _{1} +2\alpha _{2}}(x) =a_{4} ,$
for example.  Thus, the
$B$
orbits in which we are interested are locally closed sets in
$\mathfrak{n}$
or
$k^{4}$
which are defined by polynomials in the four variables of the polynomial ring
$k[X_{\alpha _{1}} ,$
$X_{\alpha _{2}} ,\thinspace X_{\alpha _{1} +\alpha _{2}} ,$
$X_{\alpha _{1} +2\alpha _{2}}]$
.

 To begin the calculations, let's determine the
$B$
orbit of the highest root vector $x_{\alpha _{1} +2\alpha _{2}}$
. Since the highest root vector is the high weight of the Adjoint representation, it is
a maximal vector - it is fixed by
$U ,$
the unipotent radical of
$B ,$
and sent to a multiple of itself by the torus
$T .$
It follows that
$B .x_{\alpha _{1} +2\alpha _{2}} =T .U .x_{\alpha _{1} +2\alpha _{2}} =T .x_{\alpha _{1} +2\alpha _{2}} \subseteq \mathfrak{g}_{\alpha _{1} +2\alpha _{2}}$
. Thus, we need only compute the
$T$
orbit of this weight vector, which is easy by direct calculation.  Abbreviating the root
vector by
$x$
, we have:
\begin{align*}T(s ,t) .x =T(s ,t)xT(s ,t)^{ -1} \\
 =\left [\begin{array}{ccccc}1 & 0 & 0 & 0 & 0 \\
0 & s & 0 & 0 & 0 \\
0 & 0 & t & 0 & 0 \\
0 & 0 & 0 & s^{ -1} & 0 \\
0 & 0 & 0 & 0 & t^{ -1}\end{array}\right ]\left [\begin{array}{ccccc}0 & 0 & 0 & 0 & 0 \\
0 & 0 & 0 & 0 & 1 \\
0 & 0 & 0 &  -1 & 0 \\
0 & 0 & 0 & 0 & 0 \\
0 & 0 & 0 & 0 & 0\end{array}\right ] \\
\begin{array}{c} \cdot \left [\begin{array}{ccccc}1 & 0 & 0 & 0 & 0 \\
0 & s^{ -1} & 0 & 0 & 0 \\
0 & 0 & t^{ -1} & 0 & 0 \\
0 & 0 & 0 & s & 0 \\
0 & 0 & 0 & 0 & t\end{array}\right ]\end{array}\end{align*}

\begin{equation*} =\left [\begin{array}{ccccc}0 & 0 & 0 & 0 & 0 \\
0 & 0 & 0 & 0 & st \\
0 & 0 & 0 &  -st & 0 \\
0 & 0 & 0 & 0 & 0 \\
0 & 0 & 0 & 0 & 0\end{array}\right ] =stx
\end{equation*}
In particular, taking
$s =1 ,$
we obtain all elements of the form
$tx ,$
$t \neq 0$
as part of this orbit.  Since
$0$
is in an orbit by itself, this shows the orbit is precisely the set of all nonzero multiples
of
$x .$
In terms of linear combinations of root vectors, or coordinates in
$\mathfrak{n} \approx k^{4}$
, this says an element
$(a_{1} ,a_{2} ,a_{3} ,a_{4}) \in B .x$
if and only if
$a_{1} =a_{2} =a_{3} =0 ,$
and
$a_{4} \neq 0.$
In other words, this shows that the orbit is
\begin{equation*}B .x =B .x_{\alpha _{1} +2\alpha _{2}} =Z(X_{\alpha _{1}} ,X_{\alpha _{2}} ,X_{\alpha _{1} +\alpha _{2}}) \cap V(X_{\alpha _{1} +2\alpha _{2}})
\end{equation*}

These are the defining equations of this orbit.  Clearly, its closure is
$\overline{B .x} =Z(X_{\alpha _{1}} ,X_{\alpha _{2}} ,X_{\alpha _{1} +\alpha _{2}}) ,$
the intersection of three coordinate hyperspaces in
$n \approx k^{4}$
which is precisely the axis of the fourth coordinate
$X_{\alpha _{1} +2\alpha _{2}}$. In particular, it is obvious that this orbit is
$1$-dimensional (it is dense in the high root space
$\mathfrak{g}_{\alpha _{1} +2\alpha _{2}}$.)

 In order to save space, we will eschew writing out the matrices from this point on,
in favor of writing elements of
$\mathfrak{n}$
as linear combinations of root vectors (or as ordered quadruples in
$k^{4})$
, and elements of
$B$
as products of elements in
$T$
and elements of the four one-dimensional root groups
$U_{\alpha }$
for
$\alpha  \in \Phi ^{ +} .$
Thus, the above calculation could be more compactly written as:
\begin{equation*}T(s ,t)U .x =T(s ,t) .x =stx
\end{equation*}
leaving the reader to check the actual matrix calculation.  In subsequent calculations, it
will be helpful to remember how the unipotent root groups act on weight vectors in rational
$G$
-modules:

\begin{lemma}
 ([H2], Proposition 27.2) Let
$\alpha  \in \Phi  ,$
and let
$v \in V_{\lambda }$
be a weight vector in any rational
$G$-module. Then each element
$u \in U_{\alpha }$
acts on
$v$
as follows:
$u .v =v +\sum _{k >0}v_{\lambda  +k\alpha } ,$
where
$v_{\lambda  +k\alpha }$
is a weight vector of weight
$\lambda  +k\alpha  ,$
and
$k$
is a positive integer.
\label{unipotentaction}
\end{lemma}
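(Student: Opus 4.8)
The plan is to reduce the claim to a polynomial computation along the one-parameter subgroup $c \mapsto U_\alpha(c)$ and then exploit the way $T$ normalizes $U_\alpha$ to read off the weights of the terms produced. Fix a rational representation $\rho : G \to GL(V)$ and the weight vector $v \in V_\lambda$, and write the chosen element as $u = U_\alpha(c)$ for some $c \in k$. Since the standard isomorphism gives $U_\alpha \cong k_{add} \cong \mathbb{A}^1$, the orbit map $c \mapsto \rho(U_\alpha(c))\,v$ is a morphism from $\mathbb{A}^1$ into the finite-dimensional space $V$, so each coordinate is a polynomial in $c$. Hence I may write $\rho(U_\alpha(c))\,v = \sum_{i \geq 0} c^i w_i$ as a finite sum with uniquely determined $w_i \in V$; evaluating at $c = 0$, where $U_\alpha(0)$ is the identity, forces $w_0 = v$.

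The heart of the argument is to identify each $w_i$ with $i > 0$ as a weight vector. For this I would invoke the structural relation $t\,U_\alpha(c)\,t^{-1} = U_\alpha(\alpha(t)\,c)$ for $t \in T$, which records that $T$ acts on the root group $U_\alpha$ through the character $\alpha$. Applying $\rho$ to both sides and acting on $v$, and using $\rho(t)^{-1}v = \lambda(t)^{-1}v$ because $v \in V_\lambda$, the left side becomes $\lambda(t)^{-1}\sum_i c^i \rho(t)w_i$ and the right side becomes $\sum_i \alpha(t)^i c^i w_i$. These polynomial identities in $c$ hold for every $t$, so comparing the coefficient of $c^i$ gives $\rho(t)w_i = \lambda(t)\alpha(t)^i w_i = (\lambda + i\alpha)(t)\,w_i$ for all $t \in T$. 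Thus whenever $w_i \neq 0$ it lies in $V_{\lambda + i\alpha}$. Writing $v_{\lambda + k\alpha} := c^k w_k$ and peeling off the $w_0 = v$ term then yields $u.v = v + \sum_{k > 0} v_{\lambda + k\alpha}$ with each $v_{\lambda + k\alpha} \in V_{\lambda + k\alpha}$; the sum is finite since $V$ has only finitely many weights, and the exponents $k$ are positive integers simply because they are the positive-degree terms appearing in the polynomial expansion.

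I do not expect a serious obstacle here: once polynomial dependence on $c$ is in hand, the weight bookkeeping is completely forced by the conjugation relation. The only two points needing care are the inputs themselves. First, polynomial dependence rests on the rationality of $\rho$ together with $U_\alpha \cong \mathbb{A}^1$; this is exactly where the rational hypothesis is used. Second, the normalization formula $t\,U_\alpha(c)\,t^{-1} = U_\alpha(\alpha(t)\,c)$ is part of the standard structure theory of root groups in a reductive group and would be cited rather than reproved. With those two facts granted, the comparison of coefficients finishes the proof.
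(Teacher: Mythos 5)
Your proof is correct. The paper does not actually prove this lemma --- it is quoted, with citation, from [H2, Proposition 27.2] --- and your argument (expand $c \mapsto \rho(U_{\alpha}(c))v$ as a polynomial $\sum_{i} c^{i}w_{i}$ with $w_{0} = v$, then use the conjugation relation $t\,U_{\alpha}(c)\,t^{-1} = U_{\alpha}(\alpha(t)c)$ and compare coefficients of $c^{i}$ to conclude $w_{i} \in V_{\lambda + i\alpha}$) is precisely the standard proof given in that reference, so your proposal agrees with the intended justification in every essential respect.
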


 Next, consider the orbit of the root vector
$x =x_{\alpha _{1} +\alpha _{2}}$
for the high short root
$\alpha _{1} +\alpha _{2} .$
If
$\gamma $
is a positive root, then by (Lemma \vref{unipotentaction}),
$U_{\gamma }(r) .x =x +w ,$
where
$w$
is a sum of root vectors for roots of the form
$(\alpha _{1} +\alpha _{2}) +k\gamma  ,$
for some
$k >0$
, but there are no roots of this form unless
$k =1$
and
$\gamma  =\alpha _{2} .$
It follows that
$w =0$
for all positive roots
$\gamma $
except
$\gamma  =\alpha _{2} .$
In particular,
$U_{\alpha _{1}} ,$
$U_{\alpha _{1} +\alpha _{2}} ,$
and
$U_{\alpha _{1} +2\alpha _{2}}$
all fix the vector
$x ,$
whence
$U .x =U_{\alpha _{2}} .x .$
Therefore, we have
$B .x =TU .x =TU_{\alpha _{2}} .x .$
Now take arbitrary elements
$T(s ,t) \in T$
and
$U_{\alpha _{2}}(r)$
and compute directly:
\label{shortrootorbit}
\begin{equation}T(s ,t)U_{\alpha _{2}}(r) .x =sx +rstx_{\alpha _{1} +2\alpha _{2}} . \label{shortrootorbit}
\end{equation}

 It follows, since
$s \neq 0$
that the
$x =x_{\alpha _{1} +\alpha _{2}}$
coordinate is nonzero, while it is clear that for all
$r \in k ,$
and
$s ,t \in k -\{0\} ,$
that the
$x_{\alpha _{1}}$
and
$x_{\alpha _{2}}$
coordinates are
$0 ,$
whence
\begin{equation*}B .x =T .U_{\alpha _{2}} .x \subseteq Z(X_{\alpha _{1}} ,\thinspace X_{\alpha _{2}}) \cap V(X_{\alpha _{1} +\alpha _{2}})
\end{equation*}

 To check the reverse containment, we start with an arbitrary element of the locally
closed set on the right, and find an element of
$B$
that carries
$x$
to that element.  so let
$y \neq 0 ,$
and
$z \in k$
be arbitrary.  Then the element
$(0 ,0 ,y ,z) =yx +zx_{\alpha _{1} +2\alpha _{2}}$
is an arbitrary element of our locally closed set.  But substitute the element
$T(y ,1)U_{\alpha _{2}}(\genfrac{(}{)}{}{}{z}{y})$
directly into (\vref{shortrootorbit}) to obtain:
\begin{equation*}T(y ,1)U_{\alpha _{2}}\genfrac{(}{)}{}{}{z}{y} .x =yx +y \cdot 1 \cdot \frac{z}{y}x_{\alpha _{1} +2\alpha _{2}} =(0 ,0 ,y ,z)
\end{equation*}

 This shows the reverse containment so proves the orbit is
$B .x =B .x_{\alpha _{1} +\alpha _{2}} =Z(X_{\alpha _{1}} ,\thinspace X_{\alpha _{2}})$
$ \cap $
$V(X_{\alpha _{1} +\alpha _{2}}) .$

 Next consider the orbit of
$x =x_{\alpha _{2}} .$
By lemma \vref{unipotentaction},
we only need consider the action of
$U_{\alpha _{1}}$
and
$U_{\alpha _{1} +\alpha _{2}} .$
By direct calculation, we have:
\label{alpha2orbit}
\begin{equation}T(p ,q)U_{\alpha _{1}}(s)U_{\alpha _{1} +\alpha _{2}}(r) .x =qx +psx_{\alpha _{1} +\alpha _{2}} -pqrx_{\alpha _{1} +2\alpha _{2}} =(0 ,q ,ps , -pqr) \label{alpha2orbit}
\end{equation}

 Since
$q \neq 0 ,$
this shows
$B .x \subseteq Z(X_{\alpha _{1}}) \cap V(X_{\alpha _{2}}) .$
To see we have equality we again start with an arbitrary element
$(0 ,w ,y ,z) \in Z(X_{\alpha _{1}}) \cap V(X_{\alpha _{2}})$
(so
$w \neq 0) ,$
and exhibit an element of
$B$
which carries
$x =(0 ,1 ,0 ,0)$
to it.  One such element is
$T(1 ,w)U_{\alpha _{1}}(y)U_{\alpha _{1} +\alpha _{2}}( -\frac{z}{w})$
. Indeed by (\vref{alpha2orbit}) we
obtain:
\begin{equation*}T(1 ,w)U_{\alpha _{1}}(y)U_{\alpha _{1} +\alpha _{2}}( -\frac{z}{w}) .x =wx +yx_{\alpha _{1} +\alpha _{2}} +zx_{\alpha _{1} +2\alpha _{2}} =(0 ,w ,y ,z)
\end{equation*}

 We have shown that
$B .x =B .x_{\alpha _{2}} =Z(X_{\alpha _{1}}) \cap V(X_{\alpha _{2}}) .$

 Next consider the orbit of
$x =x_{\alpha _{1}} .$
By lemma \vref{unipotentaction},
the
$U$-orbit of
$x$
is the same as the
$U_{\alpha _{2}}$-orbit. Thus, direct calculation yields:
\label{alpha1orbit}
\begin{equation}T(s ,t)U_{\alpha _{2}}(r) .x =\frac{s}{t}x -rsx_{\alpha _{1} +\alpha _{2}} -\frac{r^{2}st}{2}x_{\alpha _{1} +2\alpha _{2}} =(\frac{s}{t} ,0 , -rs , -\frac{r^{2}st}{2}) \label{alpha1orbit}
\end{equation}

 Note that because of the
$2$
in the denominator, we must avoid characteristic
$2$
fields. Since
$\frac{s}{t} \neq 0 ,$
this shows
$B .x \subseteq Z(X_{\alpha _{2}}) \cap V(X_{\alpha _{1}}) .$
However, unlike the above orbits, we do not have an equality in this case due to algebraic
dependence relations among the coordinates. Indeed, if we set
\begin{equation*}(\frac{s}{t} ,0 , -rs , -\frac{r^{2}st}{2}) =(w ,0 ,y ,z)
\end{equation*}

 Observe that
$y^{2} +2wz =0$
for every element of this form.  This shows that, in fact,
$B .x \subseteq Z(X_{\alpha _{2}} ,X_{\alpha _{1} +\alpha _{2}}^{2} +2X_{\alpha _{1}}X_{\alpha _{1} +2\alpha _{2}}) \cap V(X_{\alpha _{1}}) .$
We now claim we have an equality.  Indeed an arbitrary element of this locally closed set has
the form
$(w ,0 ,y ,z)$
with
$y^{2} +2wz =0$
and
$w \neq 0$
But it follows from (\vref{alpha1orbit}) that
\begin{align*}T(w ,1)U_{\alpha _{2}}( -\frac{y}{w}) .x =wx +yx_{\alpha _{1} +\alpha _{2}} -\frac{y^{2}}{2w}x_{\alpha _{1} +2\alpha _{2}} \\
 =(w ,0 ,y , -\frac{y^{2}}{2w}) =(w ,0 ,y ,z)\end{align*}
the last equality because
$y^{2} +2wz =0.$
This shows that
$B .x =B .x_{\alpha _{1}} =Z(X_{\alpha _{2}} ,X_{\alpha _{1} +\alpha _{2}}^{2} +2X_{\alpha _{1}}X_{\alpha _{1} +2\alpha _{2}}) \cap V(X_{\alpha _{1}})$
.

So far we have determined the orbits of the four root vectors, but taken together they
do not exhaust all of
$\mathfrak{n} .$
The remaining orbits can be taken to be be orbits of certain sums of root vectors.  For
example, consider the element
$x =x_{\alpha _{1}} +x_{\alpha _{1} +2\alpha _{2}} .$
All of
$U$
fixes
$x$
except for
$U_{\alpha _{2}}$
by lemma \vref{unipotentaction}.
By direct computation we have:
\begin{equation*}T(s ,t)U_{\alpha _{2}}(r) .x =\frac{s}{t}x_{\alpha _{1}} -rsx_{a_{1} +\alpha _{2}} +st(1 -\frac{r^{2}}{2})x_{\alpha _{1} +2\alpha _{2}} =(\frac{s}{t} ,0 ,rs ,st(1 -\frac{r^{2}}{2}))
\end{equation*}

 Now
$\frac{s}{t} \neq 0 ,$
so the orbit is contained in
$Z(X_{\alpha _{2}}) \cap V(X_{\alpha _{1}}) .$
But also
\begin{equation*}X_{\alpha _{1} +\alpha _{2}}^{2} +2X_{\alpha _{1}}X_{\alpha _{1} +2\alpha _{2}} =( -rs)^{2} +2\genfrac{(}{)}{}{}{s}{t}(st(1 -\frac{r^{2}}{2})) =2s^{2} \neq 0
\end{equation*}

 So
$B .x \subseteq Z(X_{\alpha _{2}}) \cap V(X_{\alpha _{1}}) \cap V(X_{\alpha _{1} +\alpha _{2}}^{2} +2X_{\alpha _{1}}X_{\alpha _{1} +2\alpha _{2}}) .$
We now prove the reverse containment.  Note that an arbitrary element of this locally closed
set has the form
$(w ,0 ,y ,z)$
with
$w \neq 0$
, and
$y^{2} +2wz \neq 0.$
Since
$k$
is not characteristic
$2 ,$
the element
$\frac{y^{2} +2wz}{2}$
exists and is nonzero in
$k ,$
and since
$k$
is algebraically closed, it's square root exists in
$k$
and is also nonzero.  Now by direct calculation we have
\begin{equation*}T\left (\sqrt{\frac{y^{2} +2wz}{2}} ,\frac{1}{w}\sqrt{\frac{y^{2} +2wz}{2}}\right )U_{\alpha _{2}}\left ( -y\sqrt{\frac{2}{y^{2} +2wz}}\right ) .x =(w ,0 ,y ,z)
\end{equation*}

 This proves
$B .x =B .(x_{\alpha _{1}} +x_{\alpha _{1} +\alpha _{2}}) =Z(X_{\alpha _{2}}) \cap V(X_{\alpha _{1}}) \cap V(X_{\alpha _{1} +\alpha _{2}}^{2} +2X_{\alpha _{1}}X_{\alpha _{1} +2\alpha _{2}})$
.

 The last orbit we need to consider is the orbit of
$x =x_{\alpha _{1}} +x_{\alpha _{2}} .$
Only
$U_{\alpha _{1} +2\alpha _{2}}$
fixes
$x ,$
so we need to see how all three of the other root groups act.  By direct matrix calculation
we have
\label{regularorbit}
\begin{equation}T(s ,t)U_{\alpha _{1}}(a)U_{\alpha _{2}}(b)U_{\alpha _{1} +\alpha _{2}}(c) .x =(\frac{s}{t} ,t ,(a -b)s , -st(\frac{b^{2}}{2} +c)) \label{regularorbit}
\end{equation}

 Since
$s ,t \neq 0$
, we have
$B .x \subseteq V(X_{\alpha _{1}}) \cap V(X_{\alpha _{2}}) .$
We now show the reverse containment,  Let
$(w ,u ,y ,z) \in V(X_{\alpha _{1}}) \cap V(X_{\alpha _{2}})$
be arbitrary (so
$w ,u \neq 0) .$
Then using (\vref{regularorbit}) , we
have
\begin{equation*}T(wu ,u)U_{\alpha _{1}}\genfrac{(}{)}{}{}{y}{wu}U_{\alpha _{2}}(0)U_{\alpha _{1} +\alpha _{2}}( -\frac{z}{wu^{2}}) .x =(w ,u ,y ,z)
\end{equation*}

 This shows
$B .x =V(X_{\alpha _{1}}) \cap V(X_{\alpha _{2}}) ,$
an open, dense orbit in
$\mathfrak{n} ,$
called the \emph{regular} orbit.  We are nearly finished with the proof of
our main result:

\begin{theorem}
 Let
$G$
be
$SO_{5}(k) ,$
where
$k$
is algebraically closed of characteristic not
$2 ,$
and let
$B$
be a Borel subgroup acting on
$\mathfrak{n}$
via the Adjoint action.  Then
$B$
has just seven orbits as indicated in the following table along with their defining
equations.  The dimensions of these orbits are also indicated in the table, and the closure order is
indicated by the Hasse diagram following the table.
\label{MainResult}

\begin{center}
\begin{tabular}[c]{|c|c|c|}\hline
element
$x$
of
$\mathfrak{n}$
& defining equations for
$B .x$
&
$\dim B .x$
\\
\hline
\hline
$0$
&
$Z(X_{\alpha _{1}} ,X_{\alpha _{2}} ,X_{\alpha _{1} +\alpha _{2}} ,X_{\alpha _{1} +2\alpha _{2}})$
&
$0$
\\
\hline
$x_{\alpha _{1} +2\alpha _{2}}$
&
$Z(X_{\alpha _{1}} ,X_{\alpha _{2}} ,X_{\alpha _{1} +\alpha _{2}}) \cap V(X_{\alpha _{1} +2\alpha _{2}})$
&
$1$
\\
\hline
$x_{\alpha _{1} +\alpha _{2}}$
&
$Z(X_{\alpha _{1}} ,X_{\alpha _{2}}) \cap V(X_{\alpha _{1} +\alpha _{2}})$
&
$2$
\\
\hline
$x_{\alpha _{1}}$
&
$Z(X_{\alpha _{2}} ,X_{\alpha _{1} +\alpha _{2}}^{2} +2X_{\alpha _{1}}X_{\alpha _{1} +2\alpha _{2}}) \cap V(X_{\alpha _{1}})$
&
$2$
\\
\hline
$x_{\alpha _{2}}$
&
$Z(X_{\alpha _{1}}) \cap V(\alpha _{2})$
&
$3$
\\
\hline
$x_{\alpha _{1}} +x_{\alpha _{1} +2\alpha _{2}}$
&
$Z(X_{\alpha _{2}}) \cap V(X_{\alpha _{1}}) \cap V(X_{\alpha _{1} +\alpha _{2}}^{2} +2X_{\alpha _{1}}X_{\alpha _{1} +2\alpha _{2}})$
&
$3$
\\
\hline
$x_{\alpha _{1}} +x_{\alpha _{2}}$
&
$V(X_{\alpha _{1}}) \cap V(X_{\alpha _{2}})$
&
$4$
\\
\hline
\end{tabular}\end{center}\par
In the Hasse diagram below of the closure ordering, each orbit is indicated by its
representative element from the first column of the table above:

\begin{center}\includegraphics[ width=2.84375in,]{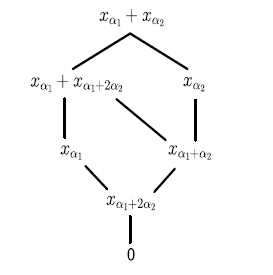}
\end{center}\par
\begin{center} The Closure order for nilpotent
$B$-orbits in type
$B_{2}$\medskip \end{center}\par
\end{theorem}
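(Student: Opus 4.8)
The plan is to read the theorem as the assembly of three separate assertions, since the substantial computational work --- exhibiting each of the seven sets in the table as a single $B$-orbit --- has already been completed in the running calculations above. What is left is: (i) to verify that the seven locally closed sets exhaust $\mathfrak{n}$ and are pairwise disjoint, so that they are exactly the orbits and no others exist; (ii) to read off the dimensions; and (iii) to determine the closure order.

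For exhaustion and disjointness I would argue by a case analysis on an arbitrary point $(a_1,a_2,a_3,a_4)\in k^4$, organized by the vanishing of the coordinates $X_{\alpha_1}$ and $X_{\alpha_2}$ and by the value of the quadratic $Q=X_{\alpha_1+\alpha_2}^2+2X_{\alpha_1}X_{\alpha_1+2\alpha_2}$. If $a_1\neq 0$ and $a_2\neq 0$ the point lies in the regular orbit; if $a_1=0,\,a_2\neq 0$ it lies in $B.x_{\alpha_2}$; if $a_1\neq 0,\,a_2=0$ it lies in $B.x_{\alpha_1}$ or in $B.(x_{\alpha_1}+x_{\alpha_1+2\alpha_2})$ according as $Q=0$ or $Q\neq 0$; and if $a_1=a_2=0$ a further split by $a_3$, and then by $a_4$, sorts it into $B.x_{\alpha_1+\alpha_2}$, $B.x_{\alpha_1+2\alpha_2}$, or $\{0\}$. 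These conditions are mutually exclusive and cover every point, so the seven sets partition $k^4$; since each is already known to be a single orbit, they are precisely the $B$-orbits, and in particular there are exactly seven.

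The dimensions then follow from the remark recorded earlier: a set $Z(f_1,\dots,f_k)\cap V(g_1)\cap\cdots\cap V(g_\ell)$ is open and dense in $Z(f_1,\dots,f_k)$, and hence of the same dimension, provided the closed set is irreducible and the open conditions are not vacuous. For six of the orbits the closed part is visibly irreducible --- a point, a line, the coordinate plane $Z(X_{\alpha_1},X_{\alpha_2})$, the two coordinate hyperplanes $Z(X_{\alpha_1})$ and $Z(X_{\alpha_2})$, and all of $k^4$ --- giving dimensions $0,1,2,3,3,4$. The one case warranting comment is $B.x_{\alpha_1}$, whose closure is $Z(X_{\alpha_2},Q)$, a quadric cone inside the hyperplane $X_{\alpha_2}=0$, itself a copy of $k^3$. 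Since $k$ has characteristic different from $2$, the form $Q$ is a nondegenerate ternary quadratic and therefore irreducible, so this closure is an irreducible surface of dimension $2$, matching the table.

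For the closure order I would compute $\overline{B.x}$ for each representative as its $Z$-part, note by Proposition 1 that each closure is a union of orbits, and then intersect it with the partition above to read off exactly which orbits it contains. For example $\overline{B.x_{\alpha_2}}=Z(X_{\alpha_1})$ meets the partition in $B.x_{\alpha_2}$, $B.x_{\alpha_1+\alpha_2}$, $B.x_{\alpha_1+2\alpha_2}$ and $\{0\}$, but in neither orbit on which $X_{\alpha_1}\neq 0$, whereas $\overline{B.(x_{\alpha_1}+x_{\alpha_1+2\alpha_2})}=Z(X_{\alpha_2})$ contains both two-dimensional orbits $B.x_{\alpha_1}$ and $B.x_{\alpha_1+\alpha_2}$. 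Collecting these containments and using Proposition 1 (strict closure containment strictly lowers the dimension) to reduce them to covering relations produces the Hasse diagram. The step that most needs care is the bookkeeping: the two incomparabilities --- $B.x_{\alpha_1}$ against $B.x_{\alpha_1+\alpha_2}$ in dimension two, and $B.x_{\alpha_2}$ against $B.(x_{\alpha_1}+x_{\alpha_1+2\alpha_2})$ in dimension three --- each demand checking that neither orbit lies in the other's closure (for instance $B.x_{\alpha_1}\not\subseteq Z(X_{\alpha_1},X_{\alpha_2})=\overline{B.x_{\alpha_1+\alpha_2}}$ because $X_{\alpha_1}\neq 0$ on $B.x_{\alpha_1}$), and the irreducibility of the quadric cone is really the only non-formal ingredient in the entire argument.
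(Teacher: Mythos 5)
Your proof is correct, and its overall architecture---relying on the orbit computations already carried out in the body of the paper, a coordinate case analysis for exhaustion, and reading dimensions and the closure order off the defining equations---matches the paper's proof; in particular your exhaustion argument is essentially verbatim the paper's case analysis. Where you genuinely diverge is in how the dimension column and the closure order are justified. The paper computes dimensions by the rule $\dim Z(f_1,\dots ,f_k)=4-k$ ``provided the $f_i$ are algebraically independent,'' and takes for granted that the closure of $Z(f_1,\dots ,f_k)\cap V(g_1)\cap \cdots \cap V(g_\ell )$ is $Z(f_1,\dots ,f_k)$; you instead identify each $Z$-part as an explicitly irreducible variety (point, line, plane, hyperplane, quadric cone, all of $k^4$) and invoke density of a nonempty open subset of an irreducible variety. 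Your route is the more watertight one: both of the paper's blanket claims need an irreducibility-type hypothesis to be literally true (for instance $x$ and $xy$ are algebraically independent in $k[x,y]$, yet $Z(x,xy)=Z(x)$ has dimension $1$, not $0$), and your observation that $Q=X_{\alpha _1+\alpha _2}^2+2X_{\alpha _1}X_{\alpha _1+2\alpha _2}$ restricts to a nondegenerate, hence irreducible, ternary quadratic form on $Z(X_{\alpha _2})\cong k^3$ in characteristic $\neq 2$ is exactly the ingredient needed to make the $B.x_{\alpha _1}$ row airtight; the paper glosses over this. For the closure order, the paper isolates the single nontrivial containment $\overline{B.x_{\alpha _1+2\alpha _2}}\subseteq \overline{B.x_{\alpha _1}}$ and verifies it by polynomial manipulation, declaring the remaining relations ``similar''; your systematic method---intersect each $Z$-part with the orbit partition and prune to covering relations---produces the same Hasse diagram and additionally certifies the non-containments that the paper leaves implicit. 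One small simplification available to you: the two incomparabilities between equal-dimensional orbits need no separate check, since by Proposition 1 the boundary of an orbit consists of orbits of strictly smaller dimension, so two distinct orbits of the same dimension can never be comparable.
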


\begin{proof}
 We have already verified the entries in the first two columns of the table.  Note
that the orbit closures are just the closed sets from the defining equations.  For example, since
$B .x_{\alpha _{1} +\alpha _{2}} =Z(X_{\alpha _{1}} ,X_{\alpha _{2}}) \cap V(X_{\alpha _{1} +\alpha _{2}})$
, we have
$\overline{B .x_{\alpha _{1} +\alpha _{2}}}$
$ =Z(X_{\alpha _{1}} ,X_{\alpha _{2}}) .$
Using the closures, we can easily determine the dimensions in the third column as well as
the closure ordering.  Note that for polynomials
$f_{i}$
in
$r$
variables, the dimension of
$Z(f_{1} ,f_{2} , . . . ,f_{k})$
is just
$r -k$
provided that the
$f_{i}$
are all algebraically independent.  It should be clear that we found all the algebraic
dependencies when we worked out the defining equations, so that the
$f_{i}$
are algebraically independent in the closed sets in the second column of the table.  Thus,
since
$r =\dim \mathfrak{n} =4$
, the dimensions in the third column are equal to
$4 -k ,$
where
$k$
is the number of polynomials whose zero sets define the orbit closure.

The only nontrivial
containment for the closure ordering is
$B .x_{a_{1} +2\alpha _{2}} \subseteq \overline{B .x_{\alpha _{1}}}$
, which happens if and only if
$\overline{B .x_{\alpha _{1} +2\alpha _{2}}} \subseteq \overline{B .x_{\alpha _{1}}} .$
So take an arbitrary element
$x \in \overline{B .x_{\alpha _{1} +2\alpha _{2}}} =Z(X_{\alpha _{1}} ,X_{\alpha _{2}} ,X_{\alpha _{1} +\alpha _{2}}) .$
Then, since both
$X_{\alpha _{1}} =0$
and
$X_{\alpha _{1} +\alpha _{2}} =0 ,$
it follows that both
$X_{\alpha _{1} +\alpha _{2}}^{2} =0$
and
$X_{\alpha _{1}}X_{\alpha _{1} +2\alpha _{2}} =0$
when evaluated at
$x .$
Therefore,
$X_{\alpha _{1} +\alpha _{2}}^{2} +2X_{\alpha _{1}}X_{\alpha _{1} +2\alpha _{2}} =0$
as well, so
$x \in Z(X_{\alpha _{2}} ,X_{\alpha _{1} +\alpha _{2}}^{2} +2X_{\alpha _{1}}X_{\alpha _{1} +2\alpha _{2}}) =_{_{}}\overline{B .x_{\alpha _{1}}}$
, showing the desired containment.  The other containments shown in the Hasse diagram
follow similarly.

All that remains to show is that we have exhausted all the nilpotent orbits in $\mathfrak{n}$.  So let $n =wx_{\alpha _{1}} +xx_{\alpha _{2}} +yx_{\alpha _{1} +\alpha _{2}} +zx_{\alpha _{1} +2\alpha _{2}} =(w ,x ,y ,z)$ be an arbitrary element of $\mathfrak{n} .$  We must show $n$ lies in one of these seven orbits.  We will distinguish cases according to how many and which of the four coordinates are $0.$  If both $w$ and $x$ are nonzero, then $n \in V(X_{\alpha _{1}}) \cap V(X_{\alpha _{2}}) =B .(x_{\alpha _{1}} +x_{\alpha _{2}}) ,$ the regular orbit.  So it only remains to consider cases when one or both of $w ,x$ are $0.$ First, suppose $w =0$ but $x \neq 0.$  Then $n =(0 ,x ,y ,z) \in Z(X_{\alpha _{1}}) \cap V(X_{\alpha _{2}}) =B .x_{\alpha _{2}} .$ On the other hand, suppose $w \neq 0$ and $x =0 ,$ so $n =(w ,0 ,y ,z) \in Z(X_{\alpha _{2}}) \cap V(X_{\alpha _{1}}) .$  But then $n \in B .(x_{\alpha _{1}} +x_{\alpha _{1} +2\alpha _{2}})$ or $n \in B .x_{\alpha _{1}} ,$ depending on whether or not $y^{2} +2wz =0.$

Lastly, we consider cases where
$w =0 =x .$
In this case,
$n =(0 ,0 ,y ,z) \in Z(X_{\alpha _{1}}) \cap Z(X_{\alpha _{2}}) .$
If
$y \neq 0 ,$
then
$n \in Z(X_{\alpha _{1}} ,X_{\alpha _{2}}) \cap V(X_{\alpha _{1} +\alpha _{2}}) =B .x_{\alpha _{1} +\alpha _{2}} .$
On the other hand, if
$y =0 ,$
then
$n =(0 ,0 ,0 ,z)$
which belongs to either
$B.0$
or
$B .x_{\alpha _{1} +2\alpha _{2}}$
, depending on whether or not
$z =0.$
This covers all possible cases, and in each case,
$n$
was in one of the above mentioned orbits, whence the union of the
$7$
orbits is all of
$\mathfrak{n} .$
\end{proof}

This completes the proof. \bigskip

\section{
Conclusions.}
The result that there are only finitely many nilpotent $B$-orbits for $SO_{5}(k)$ can be phrased in terms of a general concept for algebraic group actions called modality (see [PR], for example).  

Let
$G$
be an arbitrary algebraic group acting morphically on a non-empty variety
$V .$
The \emph{modality} of the action is
\label{modality}
\begin{equation}\ensuremath{\operatorname*{mod}}(G ,V) =\underset{Z}{\max }\underset{z \in Z}{\thinspace \min }codi\text{}m_{Z}\text{
}(G^{0} .z) , \label{modality}
\end{equation}
where
$Z$
runs through all irreducible
$G^{0}$-invariant subvarieties of
$V$. Here, $G^{0}$ is the connected component of the identity in $G .$
Informally, the modality is the maximum number of (continuous) parameters on which a family
of
$G$
orbits may depend.

Although we are mainly interested in nilpotent orbits for a Borel subgroup
$B$
of
$G ,$
much of the literature is written in terms of the more general case of orbits for a
\emph{parabolic} subgroup
$P ,$
which is any closed subgroup containing a Borel subgroup.  If
$P$
is parabolic$ ,$
denote its Lie algebra by
$\mathfrak{p} ,$
and the nilradical of
$\mathfrak{p}$
by
$\mathfrak{n}(\mathfrak{p}) .$
Then
$P$
acts on
$\mathfrak{n}(\mathfrak{p})$
via the Adjoint representation, and the \emph{modality of}
$P$
is defined to be
$\ensuremath{\operatorname*{mod}}(P ,\mathfrak{n}(\mathfrak{p}))$. Thus the modality of
$P$
is
$0$
precisely when there are only finitely many nilpotent
$P$-orbits in
$\mathfrak{n}(\mathfrak{p}) .$

When
$P =G ,$
the nilradical of
$\mathfrak{g}$
is trivial since
$\mathfrak{g}$
is simple, so the modality of
$G$
is trivially
$0.$
At the other extreme, the modality of
$B$
is almost never
$0.$
So one consequence of theorem \vref{MainResult} is that if $p \neq 2 ,$ then $B$ has modality $0$ in type $B_{2} .$  Of course, this is a well-known result. Based on earlier work in [BH], In [K], all the Borel subgroups of modality zero were determined in characteristic zero:

\begin{theorem}
(Kashin [K], 1990) Let
$G$
be quasi-simple over
$k$
, where
$k$
has characteristic zero, and suppose
$B$
is a Borel subgroup of
$G .$
The number of orbits of
$B$
on
$\mathfrak{n}$
is finite (that is,
$B$
has modality zero) if and only if
$G$
is type
$A_{n}$
for
$n \leq 4 ,$
or
$G$
is type
$B_{2} .$
\label{Kashin}
\end{theorem}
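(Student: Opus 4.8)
The plan is to prove the two implications separately. The sufficiency direction is precisely what an explicit enumeration of orbits provides: for each of the five listed types one must exhibit finitely many $B$-orbits, which is done for $B_2$ in Theorem~\vref{MainResult} above and for $A_n$ with $n \leq 4$ in the companion paper. So sufficiency reduces to direct, type-by-type verification, and no further idea is needed there.

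For the necessity direction I would show that every type not on the list has a Borel subgroup of positive modality. The central structural tool is the monotonicity of modality under passage to Levi subgroups: if $L$ is a Levi subgroup of $G$ with induced Borel $B_L = B \cap L$ acting on its nilradical $\mathfrak{n}_L \subseteq \mathfrak{n}$, then $\operatorname{mod}(B_L, \mathfrak{n}_L) \leq \operatorname{mod}(B, \mathfrak{n})$. Because a Levi subgroup corresponds to a subset of the simple roots, this reduces the infinitely many excluded types to a finite list of minimal ones. Concretely, every excluded type contains one of $A_5$, $B_3$, $C_3$, $D_4$, or $G_2$ as a standard Levi subsystem: type $A_n$ with $n \geq 5$ contains $A_5$; types $B_n$ and $C_n$ with $n \geq 3$ contain $B_3$ and $C_3$; type $D_n$ with $n \geq 4$, together with $E_6, E_7, E_8$, contains $D_4$; type $F_4$ contains $B_3$ (and $C_3$); and $G_2$ is itself minimal. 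It therefore suffices to prove $\operatorname{mod}(B, \mathfrak{n}) \geq 1$ in these five base cases.

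For each base case I would produce a $B$-invariant irreducible subvariety $Z \subseteq \mathfrak{n}$ carrying a nonconstant $B$-invariant rational function; by the definition of modality this already forces $\operatorname{mod}(B, \mathfrak{n}) \geq 1$. In practice $Z$ is the span of a suitable $B$-stable collection of root spaces, and the invariant is a ratio of coordinate monomials of equal $T$-weight that is left unchanged by the relevant unipotent root groups, the latter computed exactly as above via Lemma~\vref{unipotentaction}. The existence of such a ratio is the feature that fails in $B_2$: there the analogous quantities --- for instance the $2s^2$ produced in the computation of $B.(x_{\alpha_1} + x_{\alpha_1 + 2\alpha_2})$ --- always carry a nontrivial $T$-weight and so are rescaled to a single value by the two-dimensional torus, which is the structural reason $B_2$ lies on the finite side. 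In each of the five base cases the larger root system leaves a ratio of matching weight that the torus has too few directions to normalize, and a genuine one-parameter family of orbits survives.

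The main obstacle is the necessity direction, and within it the explicit base-case computations for $D_4$ and $G_2$. In the rank-two calculations above the positive roots were few and the unipotent part of the action on each stratum was governed by a single root group, which is what made the defining equations and the invariants so transparent. In $D_4$ and $G_2$ the richer configurations --- the triality symmetry of $D_4$ and the long--short root interaction of $G_2$ --- force several root groups to act nontrivially at once, so isolating the surviving invariant and checking that distinct parameter values give genuinely distinct orbits demands a more careful analysis. Establishing the Levi monotonicity lemma rigorously, so that the reduction to five base cases is legitimate, is the other technical point requiring care.
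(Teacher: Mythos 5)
The paper does not actually prove this statement: it is quoted as Kashin's theorem, attributed to [K], and the surrounding discussion notes that it can be recovered from Theorem 1.1 of [HR], which characterizes modality zero in terms of the nilpotency class of $R_u(P)$ --- for a Borel subgroup this is the height of the highest root, so the criterion ``height $\leq 4$'' instantly singles out $A_n$ ($n\leq 4$) and $B_2$, uniformly in all types and in good characteristic. Your outline follows the older, computational route of [BH]/[K]/[R1] instead: Levi monotonicity plus an analysis of minimal excluded cases. As a skeleton it is sound: the reduction to $A_5$, $B_3$, $C_3$, $D_4$, $G_2$ is the correct list of minimal excluded standard Levi subsystems (modulo the slip that $B_n$ contains $B_3$ but not $C_3$, and $C_n$ contains $C_3$, respectively), and the Rosenlicht-style observation --- a nonconstant $B$-invariant rational function on an irreducible $B$-stable $Z$ forbids a dense orbit in $Z$, hence forces $\operatorname{mod}(B,\mathfrak{n})\geq 1$ --- is valid. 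Note that $Z$ must be a proper subvariety: the regular orbit $B.(\sum_{\alpha\in\Delta}x_{\alpha})$ is always dense in $\mathfrak{n}$ (the centralizer of a regular nilpotent has dimension equal to the rank and lies in $B$), so no invariant can live on $\mathfrak{n}$ itself; your choice of $Z$ as a $B$-stable span of root spaces is consistent with this.

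The genuine gaps are the two ingredients you yourself flag, and they constitute essentially all of the necessity direction. First, the Levi monotonicity lemma is asserted but not proved. It is true, and the proof is short enough that you should supply it: writing $P=L\,R_u(P)$ for the standard parabolic with Levi $L$, one has $\mathfrak{n}=\mathfrak{n}_L\oplus\mathfrak{n}(\mathfrak{p})$, the projection $\pi:\mathfrak{n}\to\mathfrak{n}/\mathfrak{n}(\mathfrak{p})\cong\mathfrak{n}_L$ is $B$-equivariant with $R_u(P)$ acting trivially on the target, so for any irreducible $B_L$-stable $Z'\subseteq\mathfrak{n}_L$ the preimage $Z=\pi^{-1}(Z')\cong Z'\times\mathfrak{n}(\mathfrak{p})$ is irreducible and $B$-stable, and $\operatorname{codim}_Z(B.z)\geq\operatorname{codim}_{Z'}(B_L.\pi(z))$ for every $z\in Z$; taking minima and maxima gives $\operatorname{mod}(B_L,\mathfrak{n}_L)\leq\operatorname{mod}(B,\mathfrak{n})$. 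Second, and more seriously, none of the five base cases is actually carried out: you describe the shape of the desired invariant (a $T$-weight-zero ratio of coordinate monomials killed by the relevant root groups) but exhibit no $Z$ and no invariant for any of $A_5$, $B_3$, $C_3$, $D_4$, $G_2$, and you acknowledge that $D_4$ and $G_2$ resist the rank-two techniques of this paper. Those five computations are the mathematical heart of the theorem; without them (or an explicit appeal to [BH], where such families are produced) the argument is a plan, not a proof. Finally, be aware that your sufficiency direction rests on the companion paper [BV] for $A_n$, $n\leq 4$ --- acceptable as a citation, but by the same standard the necessity direction could simply cite [K] or [HR], which is precisely what this paper does.
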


Aside from the consequences of this theorem for our investigation on nilpotent $B$-orbits, Kashin's result launched an investigation into the modality of parabolic subgroups in general.  For example, see [BHRZ],[HR],[P],[PR],[R1], and [R2].  In fact, Theorem 1.1 in [HR] shows there is a
strong connection between the modality of a parabolic subgroup and the length of a descending
central series of $R_{u}(P) ,$ the unipotent radical of $P ,$
also called the \emph{nilpotency class }of $R_{u}(P) .$
Using this theorem, one can easily recover Kashin's original theorem, with the added
benefit that the proof is valid in good prime characteristics for
$G$
as well as for characteristic zero.
In type $A ,$ all primes are good, and in type $B ,$ all primes are good except $p =2.$

In a previous version of
this paper, using similar techniques as here, we showed
directly that there are finitely many nilpotent
$B$
orbits for
$G$
of type
$A_{1} ,A_{2} ,A_{3}$
and
$A_{4}$
without any restrictions on
$p$, and used that information to determine the dimensions of the orbits and the closure ordering. A referee pointed out to us that the closure orderings for the
$4$
type
$A$
cases were already discussed in [BHRZ], making a lot of our work seem redundant.  Note that
the techniques used in [BHRZ] are quite different than ours - they are much
more sophisticated than our matrix calculations. Their approach has some advantages, such as both being more
elegant than our approach and also closer in spirit to the way nilpotent
$G$-orbits are classified.  A possible advantage of our techniques, though, is that they yield the explicit polynomial defining equations of each orbit.  It may be an advantage to knowing these defining equations in applying this work,
perhaps to computing support varieties of baby Verma modules as discussed in Section 1, or perhaps
for other applications.  For this reason, we have uploaded our type $A$ calculations [BV]
to  arxiv.org, so that despite the overlap with [BHRZ], our tables for these orbits are publicly available.
Here we conclude by simply reminding the reader how many orbits there are in each case:
$2$
orbits in type
$A_{1} ,$
$5$
orbits in type
$A_{2} ,$
$16$
orbits in type
$A_{3} ,$
and
$61$
orbits in type
$A_{4} .$
For the details of the defining equations, etc., consult [BV], and for the dimensions of
each orbit and the Hasse diagrams of the closure order in these cases, valid for all characteristics, consult either [BHRZ] or
[BV]. \bigskip
\begin{center}{\Large References}\bigskip \end{center}\par
[B] A. Borel, \textit{Linear Algebraic Groups}, Second Enlarged Edition,
Springer, Graduate Texts in Mathematics \textbf{126}, 1991 \bigskip

[BHRZ] T. Brustle, L. Hille, G. Rohrle, and G, Zwara, \textit{The Bruhat-Chevalley
Order of Parabolic Group Actions in General Linear Groups and Degeneration for }
$\Delta $
\textit{-Filtered Modules}, Advances in Mathematics, \textbf{148}, pp.
203-242, 1999. \bigskip

[Bou] N. Bourbaki, \textit{Elements of Mathematics, Lie Groups and Lie Algebras,
Chapters}\textit{4-6}, Springer, 2002 \bigskip

[BH] H. B{\"u}rgstein and W. H. Hesselink, \textit{Algorithmic Orbit Classication for
Some Borel Group Actions}, Composito Mathematica\textbf{ 61}, pp 3-41, 1987 \bigskip

[BV] M. Burkhart and D. Vella, \textit{Defining Equations for Nilpotent Orbits for
Borel Subgroups of Modality Zero in Type $A_{n}$,} http://arxiv.org/alg/.1978875 \bigskip

 [C] R. W. Carter, \textit{Finite Groups of Lie Type}, Wiley Classics
Library, 1993\bigskip

[CM] D. H. Collingwood and W. M. McGovern, \textit{Nilpotent Orbits in Semisimple
Lie Algebras}, Van Nostrand Rheinhold Mathematics Series, 1993 \bigskip

[FP] E. M. Friedlander and B. J. Parshall, \textit{Support Varieties for Restricted
Lie Algebras}, Invent. Math. \textbf{86}, pp. 553-562, 1986 \bigskip

[H1] J. E. Humphreys,\textit{ Introduction to Lie Algebras and Representation
Theory}, Springer, Graduate Texts in Mathematics \textbf{9}, 1972 \bigskip

[H2] J.E. Humphreys,\textit{ Linear Algebraic Groups}, Springer, Graduate
Texts in Mathematics \textbf{21}, 1975 \bigskip

 [HR] L. Hille and G. R{\"o}hrle\textit{, A Classification of Parabolic Subgroups of
Classical Groups with a Finite Number of Orbits on the Unipotent Radical}, Transformation
Groups, Vol. 4, No. 1, pp. 35-52, 1999 \bigskip

 [J] J. Jantzen, \textit{Nilpotent Orbits in Representation Theory}, in
Lie Theory - Lie Algebras and Representations, Springer Progress in Mathematics \textbf{228,}
pp. 1-211, 2004. \bigskip

 [K] V. V. Kashin, \textit{Orbits of adjoint and coadjoint actions of Borel
subgroups of semisimple algebraic groups}. (In Russian) Questions of Group Theory and
Homological Algebra, Yaroslavl, 141-159, 1990 \bigskip

[NPV] D. K. Nakano, B. J. Parshall and D. C. Vella, \textit{Support Varieties for
Algebraic Groups}, Journal fur die reine und angewandte Mathematik, \textbf{547}, pp.
15-49, 2002 \bigskip

\bigskip [P] V. L. Popov, \textit{A Finiteness Theorem
for Parabolic Subgroups of Fixed Modality}, Indag. Mathem., N.S. \textbf{8} (1),
pp.125-132, 1997 \bigskip

[PR] V. L. Popov and G. R{\"o}hrle, \textit{On the Number of Orbits of a Parabolic
Subgroup on its Unipotent Radical}, Algebraic Groups and Lie Groups, G. I. Lehrer, Ed.,
Australian Mathematical Society, Vol. \textbf{9}, Cambridge University Press, 1997 \bigskip

 [R1] G. R{\"o}hrle, \textit{Parabolic Subgroups of Positive Modality},
Geometriae Dedicata,\textbf{ 60}, pp.163-186, 1996 \bigskip

 [R2] G. R{\"o}hrle,
\textit{On the Modality of Parabolic
Subgroups of Linear Algebraic Groups}, Manuscripta Math. 98, pp, 9-20, 1999 \bigskip
\end{document}